\definecolor{webgreen}{rgb}{0,.5,0}
\definecolor{webbrown}{rgb}{.6,0,0}
\tikzset{circle node/.style = {circle,inner sep=1pt,draw, fill=white},
        X node/.style = {fill=white, inner sep=1pt},
        dot node/.style = {circle, draw, inner sep=5pt}
        }
\newtheorem{theorem}{Theorem}
\newtheorem{proposition}[theorem]{Proposition}
\theoremstyle{definition}
\newtheorem{example}[theorem]{Example}
\DeclareMathOperator{\Rev}{Rev}
\newcommand{\seqnum}[1]{\href{http://oeis.org/#1}{\underline{#1}}}
\begin{document}

\begin{center}
\vskip 1cm{\LARGE\bf Series reversion with Jacobi and Thron continued fractions} \vskip 1cm \large
Paul Barry\\
School of Science\\
Waterford Institute of Technology\\
Ireland\\
\href{mailto:pbarry@wit.ie}{\tt pbarry@wit.ie}
\end{center}
\vskip .2 in

\begin{abstract} Using ordinary and exponential generating functions, we explore the reversion of power series defined by $2$nd order recurrences. We express the reversions in terms of Jacobi and Thron continued fractions. We find relations with Eulerian expressions using a transformation of continued fractions.
\end{abstract}

\section{Introduction} Since the work of Flajolet \cite{Flajolet} there has been much interest in the interplay between the theory of Jacobi continued fractions \cite{Wall} and certain topics in combinatorics. Much less attention has been paid to Thron-type continued fractions. Some recent publications \cite{Oste, Sokal, Willerton} have begun to address this issue. In this note, we look at interesting connections between series inversion, ordinary and exponential generating functions, and Jacobi and Thron-type continued fractions. 

A simple example of what follows starts by considering the generating function $g(x)=1-x$ of the sequence $(1,-1,0,0,0,\ldots)$. The revert transform of this generating function is given by $c(x)=\frac{1-\sqrt{1-4x}}{2x}$, the generating function of the Catalan numbers. This generating function has the Jacobi continued fraction expression
$$\cfrac{1}{1-x-\cfrac{x^2}{1-2x-\cfrac{x^2}{1-2x-\cfrac{x^2}{1-2x-\cdots}}}},$$ as well as the (trivial) Thron-type continued fraction
$$\cfrac{1}{1-0.x-\cfrac{x}{1-0.x-\cfrac{x}{1-0.x-\cdots}}},$$ more usually rendered as the Stieltjes continued fraction
$$\cfrac{1}{1-\cfrac{x}{1-\cfrac{x}{1-\cdots}}}.$$
On the other hand, $1-x=1-\frac{x}{1!}$ in this case also coincides with the exponential generating function of the sequence $(1,-1,0,0,0,\ldots)$. We write this as $g_e(x)=1-x$. Now $\frac{1}{g_e(x)}=\frac{1}{1-x}$ as an exponential generating function expands to give the factorial numbers $n!$, providing a link between factorial numbers $n!$ and the Catalan numbers $C_n$. This link can be explored in terms of continued fractions by noting that the ordinary generating function of $n!$ can be expressed as the Jacobi continued fraction \cite{Flajolet}
$$\cfrac{1}{1-x-\cfrac{x^2}{1-3x-\cfrac{4x^2}{1-5x-\cfrac{9x^2}{1-7x-\cdots}}}}. $$
We now observe that by taking the first differences of the coefficients of $x$, and by dividing successive coefficients of $x^2$ by $1,4,9,16,\ldots$ respectively, we recover the Jacobi-type continued fraction for $c(x)$. We wish to explore this phenomenon in a more general setting.
The link between $1-x$ and $n!$ can be seen in the context of exponential Riordan arrays by noting that
$$[\frac{1}{1-x}, x]^{-1}=[1-x, x].$$ In matrix terms, we have (using a suitable truncation) that
$$\left(
\begin{array}{cccccc}
 1 & 0 & 0 & 0 & 0 & 0 \\
 1 & 1 & 0 & 0 & 0 & 0 \\
 2 & 2 & 1 & 0 & 0 & 0 \\
 6 & 6 & 3 & 1 & 0 & 0 \\
 24 & 24 & 12 & 4 & 1 & 0 \\
 120 & 120 & 60 & 20 & 5 & 1 \\
\end{array}
\right)^{-1} = \left(
\begin{array}{cccccc}
 1 & 0 & 0 & 0 & 0 & 0 \\
 -1 & 1 & 0 & 0 & 0 & 0 \\
 0 & -2 & 1 & 0 & 0 & 0 \\
 0 & 0 & -3 & 1 & 0 & 0 \\
 0 & 0 & 0 & -4 & 1 & 0 \\
 0 & 0 & 0 & 0 & -5 & 1 \\
\end{array}
\right).$$
The association sought is seen in the first columns of these inverse matrices.

We note that many of the sequences we shall encounter are recorded in the On-Line Encyclopedia of Integer Sequences (OEIS) \cite{SL1, SL2} and where this is the case, we record such sequences under their OEIS number.

\section{Preliminaries}
We let $\mathcal{F}=\{ a_0+a_1 x+ a_2 x^2+ \cdots \,| a_i \in \mathbf{R}\}$ be the set of formal power series with coefficients $a_i$ drawn from the ring $\mathbf{R}$. This ring can be any ring over which the operations we will carry out make sense, but for concreteness it can be assumed to be $\mathbb{Q}$. For combinatorial problems, the ring $\mathbf{R}\}$ is often the ring of integers $\mathbb{Z}$.  We shall use two distinguished subsets of $\mathcal{F}$, namely
$$\mathcal{F}_0=\{ a_0+a_1 x+ a_2 x^2+ \cdots \,| a_i \in \mathbf{R}, a_0 \ne 0\},$$ and
$$\mathcal{F}_1=\{ a_1 x+ a_2 x^2+ \cdots \,| a_i \in \mathbf{R}, a_1 \ne 0\}.$$ Elements of $\mathcal{F}_1$ are composable and possess compositional inverses. If $f(x) \in \mathcal{F}_1$, we shall denote by $\bar{f}(x)$ or $\Rev(f)(x)$ its compositional inverse. Thus we have $\bar{f}(f(x))=x$ and $f(\bar{f}(x))=x$. For $g(x) \in
\mathcal{F}_0$, we shall say that the power series $\frac{1}{x}\Rev(xg(x))$ is the reversion of $g(x)$, or that it is the \emph{revert transform} of $g(x)$.

If $g(x) \in \mathcal{F}_0$ has a Jacobi continued fraction expression of the form
$$g(x)=\cfrac{a_0}{1-\alpha_1 x - \cfrac{\beta_1 x^2}{1-\alpha_2 x - \cfrac{\beta_2 x^2}{1-\alpha_3 x -\cfrac{\beta_3 x^2}{1-\alpha_4 x - \cdots}}}},$$ we shall designate as its $\mathbb{T}$ \emph{transform}, written as $\mathbb{T}(g)(x)$, the power series with Jacobi continued fraction expression
$$\mathbb{T}(g)(x)=\cfrac{a_0}{1-\alpha_1 x - \cfrac{\frac{\beta_1}{1} x^2}{1-(\alpha_2-\alpha_1) x - \cfrac{\frac{\beta_2}{4} x^2}{1-(\alpha_3-\alpha_2) x - \cfrac{\frac{\beta_3}{9}x^2}{1-(\alpha_4-\alpha_3)-\cdots}}}}.$$
Note that we sometimes use the shorthand $\mathcal{J}(\alpha_1, \alpha_2, \ldots; \beta_1, \beta_2,\ldots)$ for the Jacobi continued fraction representing $g(x)$ as above, where we assume $a_0=1$.

\section{Main result}
We consider the three-parameter generating function
$$g(x)=\frac{1+ax}{1+bx +c x^2}.$$
We then have the following result.
\begin{proposition}\label{One} We have
$$g(x)=\Rev\left(\mathbb{T}\left(\frac{1}{g_e(x)}\right)\right),$$ where $g_e(x)$ is the exponential generating function of the sequence with ordinary generating function $g(x)$.
\end{proposition}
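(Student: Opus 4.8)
The plan is to exploit that the revert transform $R(h)=\frac1x\Rev(xh)$ appearing in the proposition is an involution: since $\Rev$ denotes compositional inversion, $R(R(h))=h$ for every $h\in\mathcal{F}_0$. Reading the outer $\Rev$ in the statement as this revert transform (note that $\mathbb{T}(1/g_e)\in\mathcal{F}_0$, so literal compositional inversion does not typecheck), the identity $g=R(\mathbb{T}(1/g_e))$ is equivalent to
\[
\mathbb{T}\!\left(\tfrac{1}{g_e}\right)=R(g)=\tfrac1x\Rev\big(xg(x)\big).
\]
I would prove this equivalent form by computing the Jacobi continued fraction of each side and matching the parameters $(\alpha_k;\beta_k)$.

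First I would pin down the right-hand side. Writing $F=R(g)$ and substituting $v=xF=\Rev(xg)$ into $\phi(v)=x$ with $\phi(x)=xg(x)=\frac{x(1+ax)}{1+bx+cx^2}$, one clears denominators to reach the quadratic functional equation
\[
x(cx-a)F^2+(bx-1)F+1=0 .
\]
This is an algebraic equation of degree two of \emph{Motzkin type} (it mixes an $xF$ term, an $xF^2$ term, and an $x^2F^2$ term), so $F=R(g)$ has a Jacobi continued fraction whose coefficients $(\alpha'_k;\beta'_k)$ are constant for $k\ge 2$, and reading them off from the quadratic is a short computation. The Catalan specialization $a=-1,\ b=c=0$ gives $xF^2-F+1=0$, i.e.\ $F=1+xF^2$, recovering $\mathcal{J}(1,2,2,\dots;1,1,\dots)$ as in the Introduction; this is a useful consistency check.

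Next I would treat the left-hand side. Since the sequence with ordinary generating function $g$ satisfies the recurrence attached to $1+bx+cx^2$, its roots $r_1,r_2$ (with $r_1+r_2=-b$ and $r_1r_2=c$) give $g_e(x)=Ae^{r_1x}+Be^{r_2x}$ for constants $A,B$ fixed by the numerator $1+ax$; equivalently $g_e$ solves $g_e''+bg_e'+cg_e=0$. The crux of the whole argument is the Jacobi continued fraction of the \emph{ordinary} generating function of the sequence whose \emph{exponential} generating function is $1/g_e$. I expect this to have $\alpha_k$ \emph{affine} in $k$ and $\beta_k$ equal to a constant times $k^2$, up to a possible modification of the first pair $(\alpha_1,\beta_1)$ — exactly the shape needed so that $\mathbb{T}$, which takes first differences of the $\alpha_k$ and divides each $\beta_k$ by $k^2$, produces coefficients constant for $k\ge 2$. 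This is the step I expect to be the main obstacle. I would establish it either by invoking the classical continued-fraction expansions for reciprocals of exponential polynomials (the factorial case $1/(1-x)$ giving $\alpha_k=2k-1,\ \beta_k=k^2$, and the secant case $\operatorname{sech}$ giving $\alpha_k=0,\ \beta_k=k^2$ are the two guiding examples), or, to stay self-contained, by converting the second-order linear ODE for $g_e$ into a Riccati equation for $1/g_e$ and extracting the three-term recurrence of the associated orthogonal polynomials.

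Finally I would close the loop. Applying $\mathbb{T}$ to the affine/quadratic data from the previous step yields a Jacobi continued fraction that is constant for $k\ge 2$, hence an algebraic series of degree two, and it then remains to verify that this series satisfies the same quadratic $x(cx-a)F^2+(bx-1)F+1=0$ found for $R(g)$. Because both sides are by then determined by only the finitely many parameters $\alpha_1,\beta_1$ together with the common tail values, this reduces to matching a handful of scalars, after which the involution property delivers $g=R(\mathbb{T}(1/g_e))$ as stated. I would organize the scalar matching so that the generic case $b^2\ne 4c$ and the degenerate cases ($c=0$, or $b=c=0$ as in the $n!$ example) are all covered by the same formulas.
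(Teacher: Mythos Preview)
Your outline is correct and follows essentially the same route as the paper: reduce to $\mathbb{T}(1/g_e)=\Rev(g)$, identify the Jacobi continued fraction of the ordinary generating function attached to $1/g_e$ as having $\alpha_k$ affine in $k$ and $\beta_k$ proportional to $k^2$, apply $\mathbb{T}$ to obtain a periodic tail, and match it against the degree-two algebraic equation satisfied by $\Rev(g)$. The one place where the paper is more concrete than your proposal is the step you flag as the main obstacle: rather than going through a Riccati equation or invoking unspecified classical expansions, the paper computes $g_e$ explicitly via the inverse Laplace transform, manipulates $1/g_e$ into the form $e^{\alpha t}/(1+\beta(e^{\gamma t}-1))$, and then cites Flajolet's theorem directly to read off $\mathcal{J}\big((kb-(2k-1)a);\,k^2(a^2-ab+c)\big)$; after $\mathbb{T}$ it then solves the resulting periodic Jacobi fraction in closed form and checks it equals $\frac{2}{1-bx+\sqrt{1+2(2a-b)x+(b^2-4c)x^2}}=\Rev(g)$, which is equivalent to your quadratic $x(cx-a)F^2+(bx-1)F+1=0$.
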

Implicit in this proposition is the statement that the ordinary generating function corresponding to the  exponential generating function can be expressed as a Jacobi continued fraction.
We state this explicitly.
\begin{proposition} The sequence with exponential generating function $\frac{1}{g_e(x)}$ has an ordinary generating function given by
$$\cfrac{1}{1-(b-a)x-\cfrac{(a^2-ab+c)x^2}{1-(2b-3a)x-\cfrac{4(a^2-ab+c)x^2}{1-(3b-5a)x-\cfrac{9(a^2-ab+c)x^2}{1-\cdots}}}}.$$
\end{proposition}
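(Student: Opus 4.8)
The plan is to read off $\alpha_n$ and $\beta_n$ from the three-term recurrence of the (formal) orthogonal polynomials attached to the sequence, after first reducing the three parameters to a one-parameter normal form. Writing $g(x)=\sum_n u_n x^n$, the identity $g(x)(1+bx+cx^2)=1+ax$ gives $u_0=1$, $u_1=a-b$ and $u_n=-bu_{n-1}-cu_{n-2}$ for $n\ge 2$; consequently the exponential generating function $g_e$ solves the linear ODE $g_e''+bg_e'+cg_e=0$ with $g_e(0)=1$, $g_e'(0)=a-b$. Put $M(x)=1/g_e(x)=\sum_n v_n x^n/n!$, so that $M$ is the exponential moment generating function $\mathcal{L}[e^{xt}]$ of the functional $\mathcal{L}$ with $\mathcal{L}[t^n]=v_n$, and the sequence in the statement is $(v_n)$. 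By the classical correspondence between Jacobi continued fractions and three-term recurrences, the claimed fraction for the ordinary generating function $\sum_n v_n x^n$ is equivalent to showing that the monic orthogonal polynomials for $\mathcal{L}$ satisfy $P_{n+1}(t)=(t-\alpha_{n+1})P_n(t)-\beta_n P_{n-1}(t)$ with $\alpha_n=a+n(b-2a)$ and $\beta_n=n^2(a^2-ab+c)$. Solving the ODE yields the closed form $M(x)=e^{bx/2}\left(\cosh(\tfrac{\delta}{2}x)+\tfrac{2a-b}{\delta}\sinh(\tfrac{\delta}{2}x)\right)^{-1}$ with $\delta=\sqrt{b^2-4c}$.

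\textbf{Reductions.} Next I would exploit two invariances of the moment/continued-fraction correspondence. Multiplying $M$ by $e^{\gamma x}$ (equivalently, taking the binomial transform of $(v_n)$, i.e.\ translating $\mathcal{L}$) adds $\gamma$ to every $\alpha_n$ and leaves every $\beta_n$ fixed; replacing $x$ by $\lambda x$ (dilating $\mathcal{L}$) sends $\alpha_n\mapsto\lambda\alpha_n$ and $\beta_n\mapsto\lambda^2\beta_n$. Applying the first with $\gamma=b/2$ strips the factor $e^{bx/2}$, and the second with $\lambda=\delta/2$ absorbs the argument $\tfrac{\delta}{2}x$, so the entire statement reduces to the single base case: the function $N(y)=(\cosh y+\rho\sinh y)^{-1}$, with $\rho=(2a-b)/\delta$, has moment continued fraction $\mathcal{J}\bigl(\rho(1-2n);\,n^2(\rho^2-1)\bigr)$. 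A direct check confirms that unwinding the two operations turns these coefficients back into $a+n(b-2a)$ and $n^2(a^2-ab+c)$, using $(2a-b)^2-\delta^2=4(a^2-ab+c)$.

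\textbf{Base case.} This is the heart of the argument. Since $N$ is of exponential (Sheffer) type—indeed $g_e=1/N$ solves $g_e''=g_e$—the attached orthogonal polynomials form a Meixner-type family, exactly the class whose recurrence coefficients are affine in $n$ and quadratic in $n$, matching the shape of the claim. I would identify these polynomials explicitly (for $\rho=0$ one has $N=(\cosh y)^{-1}$, giving the classical secant case $\mathcal{J}(0;-n^2)$) and verify the recurrence in one of two ways: either by checking the Sheffer generating identity $\sum_n P_n(t)\,y^n/n!=A(y)\,e^{t\,u(y)}$ and reading off $\alpha_n,\beta_n$, or combinatorially in the style of Flajolet, showing that $v_n=n!\,[y^n]N(y)$ enumerates weighted Motzkin histories with level weight $\rho(1-2n)$ and fall weight $n^2(\rho^2-1)$.

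The step I expect to be the main obstacle is precisely this base case: pinning down the exact recurrence coefficients of the normalized family (equivalently, proving the generalized secant continued fraction for $(\cosh y+\rho\sinh y)^{-1}$). The two reductions are routine invariances, and once the base case is established the general statement follows immediately; this is also exactly the continued-fraction expansion of $\sum_n v_n x^n$ that is implicitly needed to apply the transform in Proposition~\ref{One}.
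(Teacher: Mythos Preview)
Your outline is sound and would lead to a correct proof. The closed form $M(x)=e^{bx/2}\bigl(\cosh(\tfrac{\delta}{2}x)+\rho\sinh(\tfrac{\delta}{2}x)\bigr)^{-1}$ is correct, the two invariances you invoke (binomial transform shifts every $\alpha_n$ by the same constant; dilation scales $\alpha_n$ linearly and $\beta_n$ quadratically) are standard and correctly applied, and your unwinding check recovers exactly $\alpha_n=a+n(b-2a)$ and $\beta_n=n^2(a^2-ab+c)$.

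The paper takes a different organizational route: rather than peeling off parameters via these symmetries, it directly massages $1/g_e(x)$ into the canonical Eulerian form $e^{\alpha t}\cdot\dfrac{1-v}{1-v\,e^{\gamma t}}$ and then cites Flajolet's continued-fraction theorem for that generating function in one stroke. Your one-parameter base case $(\cosh y+\rho\sinh y)^{-1}=\dfrac{2e^{y}}{(1+\rho)e^{2y}+(1-\rho)}$ is, after a trivial rewrite, exactly the same object, so both arguments ultimately rest on the same key input (the Flajolet/Meixner continued fraction). What your approach buys is a clean conceptual explanation of how the three parameters collapse to one via the Jacobi-fraction symmetry group; what the paper's direct computation buys is that nothing is left deferred, since the single citation of Flajolet already handles the full three-parameter expression. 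The step you flag as ``the main obstacle'' is thus precisely the step the paper also does not prove from scratch but imports from Flajolet.
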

\begin{proof}
We have
$$g_e(x)=\mathcal{L}_t^{-1}\left(\frac{1}{t}g\left(\frac{1}{t}\right)\right)(x),$$ where $\mathcal{L}$ is the Laplace transform.
We find that
$$g_e(x)=\frac{e^{-\frac{1}{2} t \left(\sqrt{b^2-4 c}+b\right)} \left(2 a \left(e^{t \sqrt{b^2-4 c}}-1\right)+b \left(-e^{t \sqrt{b^2-4 c}}\right)+\sqrt{b^2-4 c} \left(e^{t \sqrt{b^2-4
   c}}+1\right)+b\right)}{2 \sqrt{b^2-4 c}},$$ and hence that
\begin{align*}\frac{1}{g_e(x)}&=\frac{2 \sqrt{b^2-4 c} e^{\frac{1}{2} t \left(\sqrt{b^2-4 c}+b\right)}}{(b-2 a) \left(1-e^{t \sqrt{b^2-4 c}}\right)+\sqrt{b^2-4 c} \left(e^{t \sqrt{b^2-4 c}}+1\right)}\\
&=\frac{2 \sqrt{b^2-4 c} e^{\frac{1}{2} t \left(\sqrt{b^2-4 c}+b\right)}}
{(2a-b+\sqrt{b^2-4c})(e^{t \sqrt{b^2-4c}}-1)+2 \sqrt{b^2-4c}}\\
&=\frac{e^{\frac{1}{2} t \left(\sqrt{b^2-4 c}+b\right)}}{1+\frac{1}{2}\left(1+\frac{2a-b}{\sqrt{b^2-4c}}\right)\left(e^{\sqrt{b^2-4c}t}-1\right)}\\
&=\frac{e^{\alpha t}}{1+\beta e^{\gamma t}},\end{align*}
where
\begin{align*} \alpha&=b+\sqrt{b^2-4c}\\
\beta &=\frac{1}{2}\left(1+\frac{2a-b}{\sqrt{b^2-4c}}\right)=\frac{2a-b+\sqrt{b^2-4c}}{2\sqrt{b^2-4c}}\\
\gamma&=\sqrt{b^2-4c}.\end{align*}
We can simplify the above expression further to give
$$\frac{1}{g_e(x)}=e^{\frac{1}{2}(b+\sqrt{b^2-4c})t}.\frac{1-v}{1-v e^{(b-2a)\frac{1-v}{1+v}t}},$$
where $$v=\frac{2a-b+\sqrt{b^2-4c}}{2a-b-\sqrt{b^2-4c}}.$$
This is now in a form to apply results of Flajolet \cite{Flajolet} to arrive at our conclusion. 
\end{proof}
See Figure \ref{Motzkin} for the contributions of the $9$ weighted Motzkin paths of length $4$ to the expansion of $\Rev\left(\frac{1+ax}{1+bx+cx^2}\right)$. We have 
$$[x^4]\Rev\left(\frac{1+ax}{1+bx+cx^2}\right)=
24a^4 - 60a^3 b + 2a^2(25b^2 + 14c) - ab(15b^2 + 38c) + b^4 + 11b^2c + 5c^2.$$

\textbf{Proof of proposition}\,\,\ref{One}.
\begin{proof}By the above proposition, we have that  $\mathbb{T}\left(\frac{1}{g_e(x)}\right)$ can be expressed as the Jacobi continued fraction
$$\cfrac{1}{1-(b-a)x-\cfrac{(a^2-ab+c)x^2}{1-(b-2a)x-\cfrac{(a^2-ab+c)x^2}{1-(b-2a)x-\cfrac{(a^2-ab+c)x^2}{1-\cdots}}}}
$$
This is equal to
$$\frac{1}{1-(b-a)x-(a^2-ab+c)x^2 h(x)},$$ where $h(x)$ satisfies
$$h(x)=\frac{1}{1-(b-2a)x-(a^2-ab+c)x^2 h(x)}.$$
We find that
$$h(x)=\frac{1-(b-2a)x-\sqrt{1+2(2a-b)x+(b^2-4c)x^2}}{2x^2(a^2-ab+c)}.$$  Substituting $h(x)$ back into $\frac{1}{1-(b-a)x-(a^2-ab+c)x^2 h(x)}$, we find that
$$\mathbb{T}\left(\frac{1}{g_e(x)}\right)=\frac{2}{1-bx+\sqrt{1+2(2a-b)x+(b^2-4c)x^2}}.$$
But this last expression is precisely $\Rev(g(x))$.
\end{proof}

\begin{figure}
\begin{center}
\begin{tikzpicture}
\draw (0,0)--(1,1)--(2,2)--(3,1)--(4,0);
\foreach \Point in {(0,0), (1,1),(2,2),(3,1),(4,0)}{
 \fill \Point circle[radius=1pt]; }
   \node[align=center] at (8,0) {$4(a^2-ab+c)^2$};
\draw (0,3)--(1,4)--(2,3)--(3,4)--(4,3);
\foreach \Point in {(0,3), (1,4),(2,3),(3,4),(4,3)}{
 \fill \Point circle[radius=1pt]; }
    \node[align=center] at (8,3) {$(a^2-ab+c)^2$};
\draw (0,5)--(1,5)--(2,5)--(3,6)--(4,5);
\foreach \Point in {(0,5), (1,5),(2,5),(3,6),(4,5)}{
 \fill \Point circle[radius=1pt]; }
   \node[align=center] at (8,5) {$(b-a)^2(a^2-ab+c)$};
\draw (0,7)--(1,7)--(2,8)--(3,8)--(4,7);
\foreach \Point in {(0,7), (1,7),(2,8),(3,8),(4,7)}{
 \fill \Point circle[radius=1pt]; }
  \node[align=center] at (8,7) {$(b-a)(a^2-ab+c)(2b-3a)$};
\draw (0,9)--(1,9)--(2,10)--(3,9)--(4,9);
\foreach \Point in {(0,9), (1,9),(2,10),(3,9),(4,9)}{
 \fill \Point circle[radius=1pt]; }
  \node[align=center] at (8,9) {$(b-a)^2(a^2-ab+c)$};
\draw (0,11)--(1,12)--(2,12)--(3,12)--(4,11);
\foreach \Point in {(0,11), (1,12),(2,12),(3,12),(4,11)}{
 \fill \Point circle[radius=1pt]; }
  \node[align=center] at (8,11) {$(a^2-ab+c)(2b-3a)^2$};
\draw (0,13)--(1,14)--(2,14)--(3,13)--(4,13);
\foreach \Point in {(0,13), (1,14),(2,14),(3,13),(4,13)}{
 \fill \Point circle[radius=1pt]; }
 \node[align=center] at (8,13) {$(a^2-ab+c)(2b-3a)(b-a)$};
\draw (0,15)--(1,16)--(2,15)--(3,15)--(4,15);
\foreach \Point in {(0,15), (1,16),(2,15),(3,15),(4,15)}{
 \fill \Point circle[radius=1pt]; }
\node[align=center] at (8,15) {$(b-a)^2(a^2-ab+c)$};
\draw (0,17)--(1,17)--(2,17)--(3,17)--(4,17);
\foreach \Point in {(0,17), (1,17),(2,17),(3,17),(4,17)}{
 \fill \Point circle[radius=1pt]; }
 \node[align=center] at (8,17) {$(b-a)^4$};
\end{tikzpicture}
\caption{The $9$ Motzkin paths of length $4$ and their contributions to $[x^4]\Rev\left(\frac{1+ax}{1+bx+cx^2}\right)$}\label{Motzkin}
\end{center}
\end{figure}
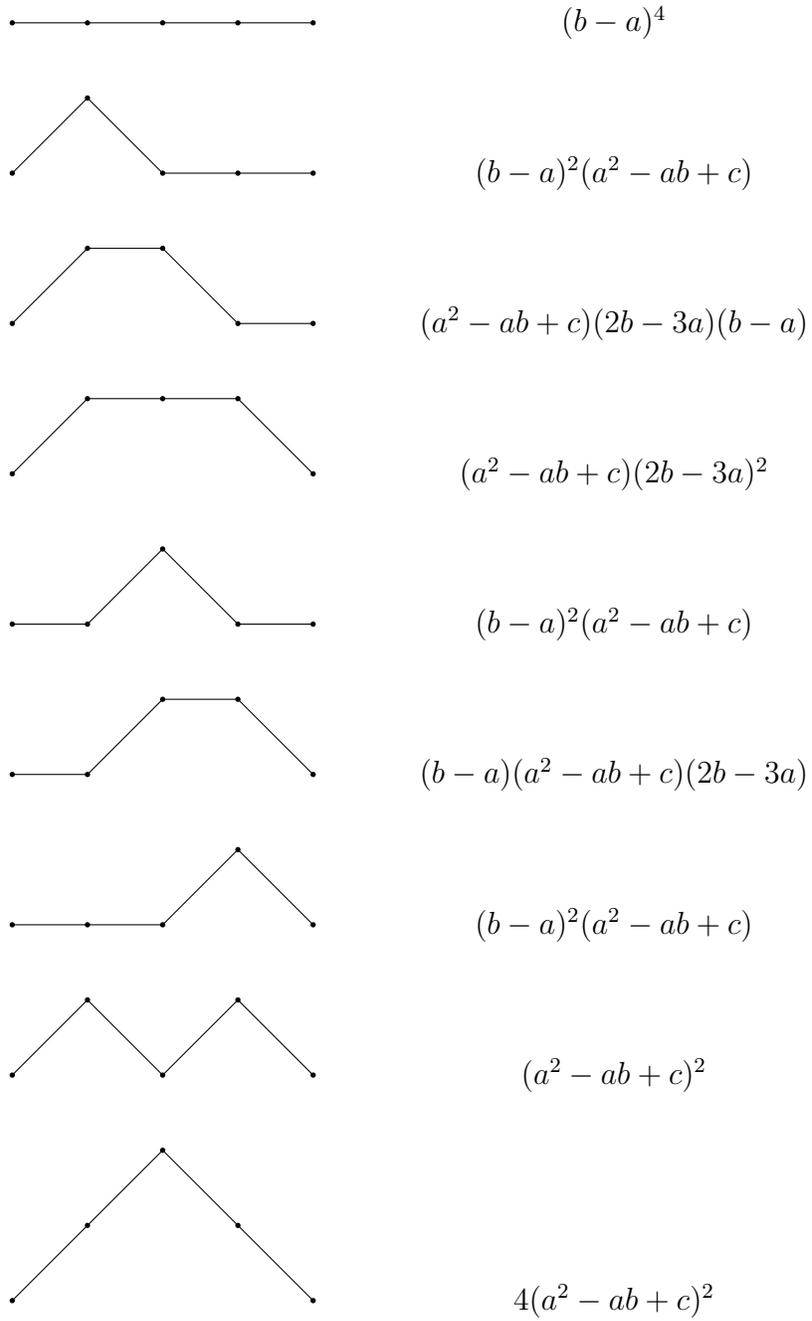

\section{Jacobi and Thron continued fractions}
We have seen that the generating function of the revert transform of $\frac{1+ax}{1+bx+cx^2}$ can be expressed as the Jacobi continued fraction
$$\cfrac{1}{1-(b-a)x-\cfrac{(a^2-ab+c)x^2}{1-(b-2a)x-\cfrac{(a^2-ab+c)x^2}{1-(b-2a)x-\cfrac{(a^2-ab+c)x^2}{1-\cdots}}}}.
$$
It is also possible to express this generating function as a Thron continued fraction. This is the content of the following proposition.
\begin{proposition} The generating function of the revert transform of $\frac{1+ax}{1+bx+cx^2}$ can be expressed as the Thron continued fraction
$$\cfrac{1}{1-qx-\cfrac{sx}{1-rx-\cfrac{sx}{1-rx-\cfrac{sx}{1-rx-\cdots}}}},$$ where
\begin{align*}q&=\frac{b-\sqrt{b^2-4c}}{2}\\
r&=-\sqrt{b^2-4c}\\
s&=-\frac{2a-\sqrt{b^2-4c}-b}{2}\\
\end{align*} and
\begin{align*}
a&=q-r-s\\
b&=2q-r\\
c&=q(q-r).\end{align*}
\end{proposition}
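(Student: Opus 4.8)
The plan is to exploit the eventual periodicity of the proposed Thron continued fraction, reduce it to a single quadratic functional equation, solve that quadratic in closed form, and then match the result against the expression for the revert transform already obtained in the proof of Proposition~\ref{One}, namely
$$\Rev(g(x))=\frac{2}{1-bx+\sqrt{1+2(2a-b)x+(b^2-4c)x^2}}.$$
Thus the whole task reduces to showing that the Thron continued fraction equals this explicit algebraic function.

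First I would isolate the periodic tail. Writing $F(x)$ for the proposed Thron continued fraction and
$$w(x)=\cfrac{sx}{1-rx-\cfrac{sx}{1-rx-\cfrac{sx}{1-rx-\cdots}}}$$
for the part of the fraction lying below the initial level, the self-similar structure of the tail gives the functional equation $w=\frac{sx}{1-rx-w}$, equivalently
$$w^2-(1-rx)w+sx=0.$$
Since $w(x)$ must be a formal power series with $w(0)=0$, I would select the branch
$$w(x)=\frac{(1-rx)-\sqrt{(1-rx)^2-4sx}}{2},$$
so that $F(x)=\dfrac{1}{1-qx-w(x)}$.

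The next step is purely algebraic. Substituting $r=-\sqrt{b^2-4c}$ and $s=-\frac{2a-\sqrt{b^2-4c}-b}{2}$ into the radicand, and using $r^2=b^2-4c$ together with the identity $r+2s=b-2a$, I would obtain
$$(1-rx)^2-4sx=1+2(2a-b)x+(b^2-4c)x^2,$$
so that the radical in $w(x)$ matches exactly the one appearing in $\Rev(g(x))$. Combining this with the relation $2q-r=b$, which follows at once from $q=\frac{b-\sqrt{b^2-4c}}{2}$, the denominator collapses to
$$1-qx-w(x)=\frac{1}{2}\bigl((1-bx)+\sqrt{1+2(2a-b)x+(b^2-4c)x^2}\bigr),$$
whence $F(x)=\Rev(g(x))$, as required. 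The stated inverse relations $a=q-r-s$, $b=2q-r$, $c=q(q-r)$ are then verified by direct back-substitution; in particular $c=q(q-r)=\frac{1}{4}\bigl(b^2-(b^2-4c)\bigr)$.

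I do not expect a deep obstacle, since the identity reduces to manipulations with one quadratic surd. The two genuinely essential points are the forced choice of the square-root branch for $w(x)$ (dictated by $w(0)=0$, which picks out the solution vanishing at the origin) and careful bookkeeping of the signs introduced by $r=-\sqrt{b^2-4c}$. The heart of the argument is simply that the two identities $r+2s=b-2a$ and $2q-r=b$ are exactly what align the Thron radicand and linear term with those of the closed form already computed for the revert transform.
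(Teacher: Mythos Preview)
Your proposal is correct and follows essentially the same approach as the paper: both solve the quadratic fixed-point equation for the periodic tail of the Thron fraction, substitute into the top level, and identify the result with the revert transform of $\frac{1+ax}{1+bx+cx^2}$. The only cosmetic difference is directional: the paper works entirely in the variables $q,r,s$, obtains the closed form $\frac{2}{1+(r-2q)x+\sqrt{1-2(r+2s)x+r^2x^2}}$, and then reverts that expression to read off $a=q-r-s$, $b=2q-r$, $c=q(q-r)$; you instead plug the stated formulas for $q,r,s$ in terms of $a,b,c$ and match against the expression for $\Rev(g(x))$ already obtained in the proof of Proposition~\ref{One}. Both routes hinge on the same two identities $r+2s=b-2a$ and $2q-r=b$.
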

\begin{proof} We solve the equation
$$u=\frac{1}{1-rx-sxu}$$ to obtain
$$u(x)=\frac{1-rx-\sqrt{1-2(r+2s)x+r^2x^2}}{2sx}.$$ Now forming
$$\frac{1}{1-qx-sx u(x)}$$ we obtain the expression
$$\frac{2}{1+(r-2q)x+\sqrt{1-2(r+2s)x+r^2x^2}}.$$
The revert transform of this last expression is
$$\frac{1+(q-r-s)x}{1+(2q-r)x+q(q-r)x^2}.$$
\end{proof}
Note that with $r=-\sqrt{b^2-4c}$, we have $q=\frac{b+r}{2}$ and $s=\frac{b-r}{2}-a$.
\begin{example} \textbf{The Fibonacci numbers}
The generating function of the revert transform of the Fibonacci numbers $F_{n+1}$ \seqnum{A000045} can be expressed as the Thron-type continued fraction
$$\cfrac{1}{1+\frac{\sqrt{5}+1}{2}x-\cfrac{\frac{\sqrt{5}-1}{2}x}{1+\sqrt{5}x-\cfrac{\frac{\sqrt{5}-1}{2}x}{1+\sqrt{5}x-\cfrac{\frac{\sqrt{5}-1}{2}x}{1+\cdots}}}}.$$
\end{example}
\begin{example} The Thron continued fraction that begins
$$\cfrac{1}{1-\frac{\sqrt{5}-1}{2}x+\cfrac{\frac{\sqrt{5}-1}{2}x}{1-\sqrt{5}x+\cfrac{\frac{\sqrt{5}-1}{2}x}{1-\sqrt{5}x+\cfrac{\frac{\sqrt{5}-1}{2}x}{1-\cdots}}}}$$ is the generating function of the revert transform of the sequence that begins
$$1,0,1,1,2,3,5,8,13,\ldots$$ with generating function $\frac{1-x}{1-x-x^2}$.
\end{example}
\begin{example} \textbf{The Jacobsthal numbers}
The Jacobsthal numbers \seqnum{A001045} have generating function $\frac{1}{1-x-2x^2}$ and begin
$$1,1,3,5,11,21,\ldots.$$
Their revert transform is the sequence beginning
$$1, -1, -1, 5, -3, -21, 51, 41, -391,\ldots.$$
The generating function $\frac{\sqrt{1+2x+9x^2}-x-1}{4x^2}$ of this  sequence has the Thron-type continued  fraction expression
$$\cfrac{1}{1-x+\cfrac{2x}{1-3x+\cfrac{2x}{1-3x+\cdots}}}.$$
\end{example}
We note that the revert transform of $[x^n]\frac{1+bx}{1+ax}$ has a generating function expressible as
$$\cfrac{1}{1-ax+\cfrac{bx}{1-ax+\cfrac{bx}{1-ax+\cdots}}}.$$
\begin{example} \textbf{The Catalan numbers $C_n$ and the factorial numbers $n!$} The revert transform of $1-x$ is $c(x)=\frac{1-\sqrt{1-4x}}{2x}$, the generating function of the Catalan numbers $C_n=\frac{1}{n+1} \binom{2n}{n}$ \seqnum{A000108}. In the expression $1-x$ we have $a=-1, b=c=0$. Thus $c(x)$ can be represented by the Thron continued fraction that has $q=r=0, s=1$. This is the well-known continued fraction (a Stieltjes continued fraction) that begins
$$\cfrac{1}{1-\cfrac{x}{1-\cfrac{x}{1-\cdots}}}.$$
The equivalent Jacobi continued fraction begins
$$\cfrac{1}{1-x-\cfrac{x^2}{1-2x-\cfrac{x^2}{1-2x-\cfrac{x^2}{1-2x-\cdots}}}}.$$
Applying $\mathbb{T}^{-1}$ to this we arrive at the generating function
$$\cfrac{1}{1-x-\cfrac{x^2}{1-3x-\cfrac{4x^2}{1-5x-\cfrac{9x^2}{1-7x-\cdots}}}}$$ of $n!$ \seqnum{A000142}.
\end{example}
\begin{example} \textbf{The Schr\"oder numbers} The prototypical example of a Thron-type continued fraction is the continued fraction expression for the generating function $\frac{1-x-\sqrt{1-6x+x^2}}{2x}$ of the Schr\"oder numbers. This generating function is the revert transform of $\frac{1-x}{1+x}$, where $a=-1, b=1$ and $c=0$. The corresponding Thron continued fraction is thus
$$\cfrac{1}{1-x-\cfrac{x}{1-x-\cfrac{x}{1-x-\cdots}}}.$$
The generating function can also be expressed as the Jacobi continued fraction
$$\cfrac{1}{1-2x-\cfrac{2x^2}{1-3x-\cfrac{2x^2}{1-3x-\cdots}}}.$$
The inverse $\mathbb{T}$ transform of this is then given by
$$\cfrac{1}{1-2x-\cfrac{4x^2}{1-5x-\cfrac{8x^2}{1-8x-\cfrac{18x^2}{1-11x-\cdots}}}}.$$
This is the generating function of the sequence that begins
$$1, 2, 6, 26, 150, 1082, 9366, 94586, 1091670, 14174522, \ldots.$$ This is \seqnum{A000629}, which counts the number of ordered set partitions of subsets of $\{1,\ldots,n\}$. Its generating function is
$$\frac{e^x}{2-e^x}.$$
We now note that the reciprocal of this exponential generating function, namely $\frac{2-e^x}{e^x}=e^{-x}(2-e^x)$, is the exponential generating function of the sequence
$$1,-2,2,-2,2,-2,\ldots$$ with ordinary generating function $\frac{1-x}{1+x}$. 
The sequence \seqnum{A000629} is given by the row sums of the exponential Riordan array 
$$\left[\frac{1}{2-e^x}, x\right]=\left[2-e^x, x\right]^{-1}.$$ 
The matrix defined by $\left[2-e^x, x\right]$ begins
$$\left(
\begin{array}{ccccccc}
 1 & 0 & 0 & 0 & 0 & 0 & 0 \\
 -1 & 1 & 0 & 0 & 0 & 0 & 0 \\
 -1 & -2 & 1 & 0 & 0 & 0 & 0 \\
 -1 & -3 & -3 & 1 & 0 & 0 & 0 \\
 -1 & -4 & -6 & -4 & 1 & 0 & 0 \\
 -1 & -5 & -10 & -10 & -5 & 1 & 0 \\
 -1 & -6 & -15 & -20 & -15 & -6 & 1 \\
\end{array}
\right).$$ This is a signed variant of Pascal's triangle. Then the matrix defined by $\left[\frac{1}{2-e^x}, x\right]$ begins
$$\left(
\begin{array}{ccccccc}
 1 & 0 & 0 & 0 & 0 & 0 & 0 \\
 1 & 1 & 0 & 0 & 0 & 0 & 0 \\
 3 & 2 & 1 & 0 & 0 & 0 & 0 \\
 13 & 9 & 3 & 1 & 0 & 0 & 0 \\
 75 & 52 & 18 & 4 & 1 & 0 & 0 \\
 541 & 375 & 130 & 30 & 5 & 1 & 0 \\
 4683 & 3246 & 1125 & 260 & 45 & 6 & 1 \\
\end{array}
\right).$$ 
The first column numbers in this array are the Fubini numbers, \seqnum{A000670}, which count the number of ordered partitions of $[n]$. They have exponential generating function $\frac{1}{2-e^x}$, and their ordinary generating function is given by the Jacobi continued fraction $\mathcal{J}(3n+1; 2(n+1)^2)$. We have 
$$\mathbb{T}(\mathcal{J}(3n+1; 2(n+1)^2))=\mathcal{J}(1,3,3,\ldots; 2,2,2,\ldots)=\frac{1+x-\sqrt{1-6x+x^2}}{4x}.$$ 
Thus the $\mathbb{T}$ transform of the Fubini numbers is given by the little Schr\"oder numbers \seqnum{A001003}. These are the revert transform of the sequence 
$$1,-1,-1,-1,-1,\ldots$$ with ordinary generating function $\frac{1-2x}{1-x}$ and exponential generating function $2-e^x$. 

\end{example}
\section{Eulerian link}
In this section we begin with the generating function
$$f(x)=\frac{1+(q-r-s)x}{1+(2q-r)x+q(q-r)x^2}.$$ We then find that
$$\frac{r e^{qx}}{r+s-se^{rx}}=\frac{1}{\mathcal{L}^{-1}\left(\frac{1}{u} f\left(\frac{1}{u}\right)\right)}.$$
We note that in the case where $r+s=\alpha, s=\beta$, and $q=r=\alpha-\beta$, we obtain the exponential generating function
$$\frac{(\alpha-\beta) e^{(\alpha-\beta)}}{\alpha-\beta e^{(\alpha-\beta)x}},$$ which expands to give
$$1, \alpha, \alpha(\alpha+\beta), \alpha(\alpha^2+4 \alpha \beta + \beta^2), \alpha(\alpha^3+11 \alpha^2 \beta+11 \alpha \beta^2+\beta^3),\ldots.$$
When $\alpha=1$ and $\beta=t$, these are the Eulerian polynomials \cite{Eulerian}.

Returning to the general case, we find that the generating function $\frac{r e^{qx}}{r+s-se^{rx}}$ may be expressed as the Jacobi continued fraction
$$\mathcal{J}(q+s, 3s+q+r, 5s+q+2r, 7s+q+3r,\ldots; s(r+s), 4 s(r+s), 9 s(r+s),\ldots).$$
Thus the $\mathbb{T}$ transform of this can be expressed as the Jacobi continued fraction
$$\mathcal{J}(q+s, r+2s, r+2s, r+2s,\ldots; s(r+s), s(r+s), s(r+s),\ldots).$$
Alternatively, this may be represented by the Thron continued fraction that begins
$$\cfrac{1}{1-qx-\cfrac{sx}{1-rx-\cfrac{sx}{1-rx-\cdots}}}.$$
These last two continued fractions give the generating function of the revert transform of
$$f(x)=\frac{1+(q-r-s)x}{1+(2q-r)x+q(q-r)x^2}.$$

\section{Narayana and Euler}
Our starting point in this section is the generating function
$$g(x)=\frac{1}{1+(y+1)x+yx^2}.$$ Thus we have $a=0, b=y+1$ and $c=y$.
The revert transform of $f(x)$ is given by
$$\Rev(g)(x)=\frac{1-(y+1)x-\sqrt{1-2(y+1)x+(1-y)^2x^2}}{2x^2y},$$ which is the bivariate generating function of the (symmetric) Narayana triangle, that begins
$$\left(
\begin{array}{cccccc}
 1 & 0 & 0 & 0 & 0 & 0 \\
 1 & 1 & 0 & 0 & 0 & 0 \\
 1 & 3 & 1 & 0 & 0 & 0 \\
 1 & 6 & 6 & 1 & 0 & 0 \\
 1 & 10 & 20 & 10 & 1 & 0 \\
 1 & 15 & 50 & 50 & 15 & 1 \\
\end{array}
\right).$$
The generating function $\Rev(g)(x)$ can be represented by the Jacobi continued fraction
$$\Rev(g)(x)=\frac{1}{1-(y+1)x-\cfrac{yx^2}{1-(y+1)x+\cfrac{yx^2}{1-(y+1)x-\cdots}}},$$ and by the Thron continued faction
$$\cfrac{1}{1-x-\cfrac{yx}{1-(1-y)x-\cfrac{yx}{1-(1-y)x-\cfrac{yx}{1+(y+1)x-\cdots}}}}.$$
The generating function $\Rev(g)(x)$ is the $\mathbb{T}$ transform of the continued fraction
$$\cfrac{1}{1-(y+1)x-\cfrac{yx^2}{1-2(y+1)x-\cfrac{4yx^2}{1-3(y+1)x-\cfrac{9yx^2}{1-4(y+1)-\cdots}}}}.$$
This corresponds to the exponential generating function
$$\frac{(y-1)e^{xy}}{y-e^{(y-1)x}},$$ which is the bivariate generating function of the number triangle
that begins
$$\left(
\begin{array}{ccccccc}
 1 & 0 & 0 & 0 & 0 & 0 & 0 \\
 1 & 1 & 0 & 0 & 0 & 0 & 0 \\
 1 & 3 & 1 & 0 & 0 & 0 & 0 \\
 1 & 7 & 7 & 1 & 0 & 0 & 0 \\
 1 & 15 & 33 & 15 & 1 & 0 & 0 \\
 1 & 31 & 131 & 131 & 31 & 1 & 0 \\
 1 & 63 & 473 & 883 & 473 & 63 & 1 \\
\end{array}
\right).$$
The reciprocal of this exponential generating function $\frac{y-exp^{(y-1)x}}{(y-1)e^{xy}}$ corresponds to the ordinary generating function $\frac{1}{1+(y+1)x+yx^2}$.

The number triangle with generating function $\frac{(y-1)e^{xy}}{y-e^{(y-1)x}}$ is \seqnum{A046802}. This triangle can be described using the elements of the  Eulerian triangle $(W_{n,k})$ \cite{Hirz} with generating function $\frac{(y-1)}{y-e^{(y-1)x}}$, which begins
$$\left(
\begin{array}{ccccccc}
 1 & 0 & 0 & 0 & 0 & 0 & 0 \\
 1 & 0 & 0 & 0 & 0 & 0 & 0 \\
 1 & 1 & 0 & 0 & 0 & 0 & 0 \\
 1 & 4 & 1 & 0 & 0 & 0 & 0 \\
 1 & 11 & 11 & 1 & 0 & 0 & 0 \\
 1 & 26 & 66 & 26 & 1 & 0 & 0 \\
 1 & 57 & 302 & 302 & 57 & 1 & 0 \\
\end{array}
\right).$$
The elements $P_n(y)$ of the polynomial sequence
$$1,y+1,1+3y+y^2, 1+6y+6y^2+6y^3,\ldots$$ can then be described by
$$P_n(y)=\sum_{k=0}^n \sum_{j=0}^n \binom{n}{j}A_{j,k}y^{j-k}.$$ This sequence is in fact a moment sequence. This is the content of the next proposition.
\begin{proposition} The sequence $P_n(y)$ is the moment sequence for the family of orthogonal polynomials whose moment matrix is given by the exponential Riordan array
$$\left[\frac{(y-1)e^{xy}}{y-e^{(y-1)x}}, \frac{e^x-e^{xy}}{e^{xy}-ye^x}\right].$$
\end{proposition}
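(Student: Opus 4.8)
The plan is to identify the sequence $P_n(y)$ as the moment sequence of a family of orthogonal polynomials by exhibiting its ordinary generating function as a Jacobi continued fraction, and then to recognize the claimed exponential Riordan array as the associated moment matrix. The starting observation is that $P_n(y)$ has exponential generating function $\frac{(y-1)e^{xy}}{y-e^{(y-1)x}}$, which, by the discussion preceding the statement, is precisely the exponential generating function obtained as the $\mathbb{T}^{-1}$-type companion to $\Rev(g)(x)$ for $g(x)=\frac{1}{1+(y+1)x+yx^2}$. First I would read off, from the earlier Jacobi continued fraction
$$\cfrac{1}{1-(y+1)x-\cfrac{yx^2}{1-2(y+1)x-\cfrac{4yx^2}{1-3(y+1)x-\cfrac{9yx^2}{1-\cdots}}}},$$
that the ordinary generating function of $P_n(y)$ is $\mathcal{J}(\alpha_n;\beta_n)$ with $\alpha_{n+1}=(n+1)(y+1)$ and $\beta_n = n^2 y$. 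The existence of such a Jacobi continued fraction with $\beta_n\ne 0$ is exactly the statement, via Flajolet's theory, that $P_n(y)$ is a Hankel-positive (formal) moment sequence, so there is a unique monic family of orthogonal polynomials having these $\alpha_n,\beta_n$ as three-term recurrence coefficients.

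Next I would construct the moment matrix explicitly. For a moment sequence with orthogonal polynomials $p_n$, the moment matrix (the array whose $(n,k)$ entry is the $k$-th moment of the associated measure read off along production/Stieltjes rows) is governed entirely by the coefficients $\alpha_n,\beta_n$ through its production matrix, which is the tridiagonal matrix with diagonal $\alpha_{n+1}$, superdiagonal $1$, and subdiagonal $\beta_n$. Because an exponential Riordan array $[g_e(x),f_e(x)]$ has a production matrix determined by the pair $(g_e,f_e)$ through the standard formula relating the generating functions to the bivariate exponential generating function $\sum_{n\ge0}\sum_k P_{n,k}\frac{x^n}{n!}t^k$, my task reduces to verifying that the pair
$$\left[\frac{(y-1)e^{xy}}{y-e^{(y-1)x}},\ \frac{e^x-e^{xy}}{e^{xy}-ye^x}\right]$$
produces exactly this tridiagonal production matrix with entries $\alpha_{n+1}=(n+1)(y+1)$ and $\beta_n=n^2 y$. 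Here the first component is forced to be the exponential generating function of the first column $P_n(y)$, which we already know; the content is that the second component $f_e(x)=\frac{e^x-e^{xy}}{e^{xy}-ye^x}$ is the correct ``jump'' function making the whole array a moment matrix rather than merely having the right first column.

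The concrete steps, then, are: (i) compute the production matrix of the exponential Riordan array $[g_e,f_e]$ using the known identity that, for $[g_e,f_e]$ with $f_e\in\mathcal{F}_1$, the production matrix entries satisfy $\sum_{n}P_{n,k}\frac{x^n}{n!}=\bigl(c(x)+k\,r(x)\bigr)$-type expressions where $r(x)=f_e'(\bar f_e(x))$ and $c(x)=g_e'(\bar f_e(x))/g_e(\bar f_e(x))$; (ii) show $r(x)$ is affine in $x$ with the right linear coefficient so that the superdiagonal is constant $1$ and the diagonal grows as $(n+1)(y+1)$; and (iii) show $c(x)$ together with $r(x)$ forces the subdiagonal $\beta_n=n^2y$. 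Equivalently, and perhaps more cleanly, I would verify that $f_e$ satisfies the autonomous differential equation whose linearization reproduces $r(x)=1+(y+1)x+\cdots$ with the quadratic term encoding $\beta_n$. The main obstacle I anticipate is step (i)--(iii): checking that the specific transcendental pair $(g_e,f_e)$ yields exactly the tridiagonal production matrix with the prescribed $\alpha_n,\beta_n$. This is a computation with the compositional inverse $\bar f_e$ of a transcendental series, and the simplification $r(x)=f_e'(\bar f_e(x))$ into an affine function of $x$ is the crux; once $r$ and $c$ are shown to be polynomials of degrees $1$ and $1$ respectively (so the production matrix is tridiagonal) with the stated coefficients, the identification with the orthogonal-polynomial moment matrix for $\mathcal{J}((n+1)(y+1);n^2y)$ is immediate and the proposition follows.
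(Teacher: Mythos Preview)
Your approach is the same as the paper's: compute the production matrix of the exponential Riordan array $[g_e,f_e]$ via the $A$--$Z$ (or $r$--$c$) functions and verify that it is tridiagonal with diagonal $(n+1)(y+1)$, superdiagonal $1$, and subdiagonal $n^2y$. The paper simply asserts this computation and displays the resulting matrix; you have spelled out the mechanism.

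One correction to your outline: in step (ii) you say you will show that $r(x)=f_e'(\bar f_e(x))$ is \emph{affine}. It is not; it is quadratic. Indeed, from $f_e(x)=\dfrac{e^x-e^{xy}}{e^{xy}-ye^x}$ one gets $e^{(y-1)x}=\dfrac{1+ty}{1+t}$ at $t=f_e(x)$, and differentiating yields $f_e'(x)=(1+t)(1+ty)$, so
\[
r(t)=A(t)=(1+t)(1+ty)=1+(y+1)t+yt^2,\qquad Z(t)=\frac{g_e'}{g_e}(\bar f_e(t))=(y+1)+yt.
\]
The quadratic term $r_2=y$ is essential: with the standard formula $p_{i,j}=\dfrac{i!}{j!}\bigl(z_{i-j}+j\,r_{i-j+1}\bigr)$ one gets $p_{n,n-1}=n\bigl(z_1+(n-1)r_2\bigr)=n\bigl(y+(n-1)y\bigr)=n^2y$. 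If $r$ were affine the subdiagonal would be linear in $n$, not $n^2y$. Once this slip is fixed, your steps (i)--(iii) go through and match the paper's proof exactly.
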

\begin{proof} Using the theory of Riordan arrays, we find that the production matrix of the above exponential Riordan array is tri-diagonal, beginning
$$\left(
\begin{array}{ccccc}
 y+1 & 1 & 0 & 0 & 0 \\
 y & 2 (y+1) & 1 & 0 & 0 \\
 0 & 4 y & 3 (y+1) & 1 & 0 \\
 0 & 0 & 9 y & 4 (y+1) & 1 \\
 0 & 0 & 0 & 16 y & 5 (y+1) \\
\end{array}
\right).$$
\end{proof}
For more about Riordan arrays and their links to the topics of this note, we refer to the next section.

\section{Riordan arrays}
We recall that an ordinary Riordan array \cite{Book, SGWW} is defined by a pair $(g, f) \in \mathcal{F}_0 \times \mathcal{F}_1$. To this pair is associated the matrix $(a_{n,k})_{0 \le n,k \le \infty}$ with general element
$$a_{n,k}=[x^n] g(x)f(x)^k.$$ Here, $[x^n]$ is the functional that extracts the coefficient of $x^n$ when applied to a power series. An exponential Riordan array is defined by a pair $(g, f) \in \mathcal{F}_0^{(e)} \times \mathcal{F}_1^{(e)},$ where
$$\mathcal{F}_0^{(e)}=\{g(x)=g_0+g_1 \frac{x}{1!}+ g_2 \frac{x}{2!} + \cdots\,|\,g_0 \ne 0\},$$
and
$$\mathcal{F}_1^{(e)}=\{f(x)=f_1 \frac{x}{1!}+f_2 \frac{x^2}{2!}+ \cdots\,|\, f_0=0, f_1 \ne 0\}.$$
To distinguish exponential Riordan arrays from ordinary Riordan arrays, we use the notation $[g,f]$ for exponential Riordan arrays. The general element of the matrix representing $[g, f]$ is given by
$$ \frac{n!}{k!} [x^n]g(x)f(x)^k.$$

The \emph{fundamental theorem of Riordan arrays} details how a Riordan array operates on a generating function.
We have $$(g(x), f(x))\cdot h(x)=g(x)h(f(x))$$ with a similar statement for exponential Riordan arrays.

The expansion of the exponential generating function $\frac{r e^{qx}}{r+s-se^{rx}}$ begins
$$1, q + s, q^2 + 2qs + s(r + 2s), q^3 + 3q^2s + 3qs(r + 2s) + s(r^2 + 6rs + 6s^2),\ldots.$$
As a polynomial sequence in $s$, this sequence $P_n(s)$ has a coefficient array determined by the exponential Riordan array
$$\left[e^{qx}, \frac{e^{rx}-1}{r}\right].$$ Note that when $q=0$ and $r=1$, we get the Riordan array $[1, e^{x}-1]$, whose elements are the Stirling numbers of the $2$nd kind.

We find that
\begin{align*}P_n(s)&=\sum_{k=0}^n n![x^n] e^{qx} \left(\frac{e^{rx}-1}{r}\right)^k s^k\\
&= \sum_{k=0}^n k! a_{n,k} s^k,
\end{align*}
where $a_{n,k}=\sum_{i=0}^n \binom{n}{i}q^{n-i}S_2(i,k)r^{i-k}$ is the $(n,k)$-th element of the exponential Riordan array $\left[e^{qx}, \frac{e^{rx}-1}{r}\right]$.

The generating function of the revert transform of $g(x)=\frac{1+(q-r-s)x}{1+(2q-r)x+q(q-r)x^2}$  is given
by $$\frac{2}{1+(r-2q)x+\sqrt{1-2(r+2s)x+r^2x^2}}.$$
This may be expressed as
$$\left(\frac{1}{1-(2q-r)x}, \frac{x(q(q-r)x-q+r+s)}{(1-(2q-r)x)^2}\right)\cdot c(x),$$ where $c(x)=\frac{1-\sqrt{1-4x}}{2x}$ is the generating function of the Catalan numbers. Alternatively, we have that the generating function of the revert transform of $g(x)=\frac{1+(q-r-s)x}{1+(2q-r)x+q(q-r)x^2}$ is given by the first element of the inverse array
$$\left(\frac{1-(q-r-s)x}{1+(r+2s)x+s(r+s)x^2}, \frac{x}{1+(r+2s)x+s(r+s)x^2}\right)^{-1},$$ indicating that the revert transform is the moment sequence for the family of orthogonal polynomials whose coefficient array is given by the Riordan array $$\left(\frac{1-(q-r-s)x}{1+(r+2s)x+s(r+s)x^2}, \frac{x}{1+(r+2s)x+s(r+s)x^2}\right).$$

\begin{proposition} The expansion of the generating function $\frac{re^{qx}}{r+s-se^{rx}}$ is the moment sequence of the family of orthogonal polynomials whose moment matrix is given by the exponential Riordan array
$$\left[\frac{re^{qx}}{r+s-se^{rx}}, \frac{e^{rx}-1}{s+r-se^{rx}}\right].$$
\end{proposition}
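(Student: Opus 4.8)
The plan is to identify the given exponential Riordan array $L=\left[\frac{re^{qx}}{r+s-se^{rx}},\frac{e^{rx}-1}{s+r-se^{rx}}\right]$ as a moment matrix by computing its production matrix $P=L^{-1}\bar L$ and showing it is tridiagonal with entries matching the Jacobi continued fraction $\mathcal{J}(q+s,3s+q+r,5s+q+2r,\ldots;s(r+s),4s(r+s),9s(r+s),\ldots)$ already established for the ordinary generating function of the expansion of $\frac{re^{qx}}{r+s-se^{rx}}$. The underlying principle, standard in the theory of Riordan arrays and orthogonal polynomials and already invoked in the Narayana section, is that an exponential Riordan array $[g,f]$ whose production matrix is tridiagonal is precisely the moment matrix of a family of orthogonal polynomials, namely the rows of $[g,f]^{-1}$; the moment sequence is then the first column of $[g,f]$, i.e.\ the expansion of $g$, and the diagonal and subdiagonal of $P$ supply the parameters $\alpha_n,\beta_n$ of the Jacobi continued fraction of its ordinary generating function. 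Since the first column of $L$ is exactly the expansion of $\frac{re^{qx}}{r+s-se^{rx}}$, it suffices to verify tridiagonality and read off the entries.

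First I would compute the compositional inverse of $f(x)=\frac{e^{rx}-1}{s+r-se^{rx}}$. Writing $u=e^{rx}$ and solving $y=\frac{u-1}{(s+r)-su}$ for $u$ gives $u=\frac{1+(s+r)y}{1+sy}$, hence $\bar f(y)=\frac{1}{r}\log\frac{1+(s+r)y}{1+sy}$. Next I would form the two structural series $A(y)=f'(\bar f(y))$ and $Z(y)=\frac{g'(\bar f(y))}{g(\bar f(y))}$ that govern the production matrix. A direct differentiation gives $f'(x)=\frac{r^2e^{rx}}{[(s+r)-se^{rx}]^2}$, and the key simplification is that at $x=\bar f(y)$ one has $(s+r)-se^{rx}=\frac{r}{1+sy}$; substituting collapses everything to the polynomial $A(y)=(1+(s+r)y)(1+sy)=1+(r+2s)y+s(r+s)y^2$. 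Similarly $\frac{g'(x)}{g(x)}=q+\frac{sre^{rx}}{(s+r)-se^{rx}}$, and the same substitution yields the linear polynomial $Z(y)=(q+s)+s(r+s)y$.

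Finally, because $A$ has degree $2$ and $Z$ degree $1$, the production matrix of $[g,f]$ is tridiagonal. Applying the standard production-matrix formula for exponential Riordan arrays, namely that with $A(y)=a_0+a_1y+a_2y^2$ and $Z(y)=z_0+z_1y$ the entries are $p_{n,n+1}=a_0$, $p_{n,n}=z_0+na_1$, and $p_{n,n-1}=n(z_1+(n-1)a_2)$, I would read off $p_{n,n+1}=1$, $p_{n,n}=(q+s)+n(r+2s)$, and $p_{n,n-1}=n^2s(r+s)$. These are exactly the diagonal $\alpha_{n+1}=q+nr+(2n+1)s$ and subdiagonal $\beta_n=n^2s(r+s)$ of $\mathcal{J}(q+s,3s+q+r,5s+q+2r,\ldots;s(r+s),4s(r+s),9s(r+s),\ldots)$, confirming that $L$ is a moment matrix with moment sequence the expansion of $\frac{re^{qx}}{r+s-se^{rx}}$.

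The main obstacle I expect is the explicit inversion of $f$ and the subsequent simplification of $A$ and $Z$; everything hinges on the identity $(s+r)-se^{r\bar f(y)}=\frac{r}{1+sy}$, which is what forces both series to truncate to polynomials and hence makes the production matrix tridiagonal. As an internal consistency check I would specialize to $q=0,r=1,s=0$, where $[g,f]$ reduces to $[1,e^{x}-1]$ (the Stirling numbers of the second kind) and the formula correctly returns $p_{n,n}=n$, $p_{n,n+1}=1$, $p_{n,n-1}=0$.
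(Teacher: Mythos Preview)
Your proposal is correct and follows the same approach as the paper: both arguments establish the result by showing that the production matrix of the exponential Riordan array is tridiagonal, with entries matching the known Jacobi parameters. The paper's proof simply displays the first few rows of the production matrix and cites a reference, whereas you supply the actual computation by explicitly inverting $f$, evaluating the structural series $A(y)=(1+(s+r)y)(1+sy)$ and $Z(y)=(q+s)+s(r+s)y$, and then reading off the entries via the standard production-matrix formula; in that sense your argument is a fleshed-out version of the paper's sketch rather than a different route.
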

\begin{proof}
In effect, the Riordan array of the statement has a production matrix that begins
$$\left(
\begin{array}{ccccc}
 q+s & 1 & 0 & 0 & 0 \\
 s (r+s) & q+r+3 s & 1 & 0 & 0 \\
 0 & 4 s(r+s) & q+2 r+5 s & 1 & 0 \\
 0 & 0 & 9 s(r+s) & q+3 r+7 s & 1 \\
 0 & 0 & 0 & 16 s (r+s) & q+4 r+9 s \\
\end{array}
\right).$$ This shows \cite{Classical} that it is the moment matrix of a family of orthogonal polynomials whose coefficient array is given by $\left[\frac{re^{qx}}{r+s-se^{rx}}, \frac{e^{rx}-1}{s+r-se^{rx}}\right]^{-1}$.
\end{proof}
Note that
$$\left[\frac{re^{qx}}{r+s-se^{rx}}, \frac{e^{rx}-1}{s+r-se^{rx}}\right]^{-1}=\left[\frac{(1+sx)^{\frac{q-r}{r}}}{(1+(r+s)x)^{\frac{q}{r}}}, \frac{1}{r}\ln\left(\frac{1+(r+s)x}{1+sx}\right)\right]$$ is the coefficient matrix of the relevant family of orthogonal polynomials.

\section{List partition transform}
In \seqnum{A133314} \cite{SL1} Copeland defines a ``list partition transform'' of a sequence $a_n$ with exponential generating function $g(x)$ to be the sequence with exponential generating function $\frac{1}{g(x)}$. Thus the sequence $1,-1,0,0,0,\ldots$ is the list partition transform of $n!$. This transform consists thus in associating the first column of the matrix $[g(x),x]$ with the first column of its inverse $\left[\frac{1}{g(x)}, x\right]$. Note that the general term of $[g(x),x]$ is $\binom{n}{k}a_{n-k}$.
\begin{example}
We consider the sequence \seqnum{A000522} of the total number of arrangements of a set of $n$ elements, given by $$a_n=\sum_{k=0}^n \frac{n!}{k!}.$$ This sequence has exponential generating function $\frac{e^x}{1-x}$.
We have $$\left[\frac{e^x}{1-x}, x\right]^{-1}=\left[\frac{1-x}{e^x}, x\right]=\left[e^{-x}(1-x), x\right].$$
In matrix terms, we have
$$\left(
\begin{array}{cccccc}
 1 & 0 & 0 & 0 & 0 & 0 \\
 2 & 1 & 0 & 0 & 0 & 0 \\
 5 & 4 & 1 & 0 & 0 & 0 \\
 16 & 15 & 6 & 1 & 0 & 0 \\
 65 & 64 & 30 & 8 & 1 & 0 \\
 326 & 325 & 160 & 50 & 10 & 1 \\
\end{array}
\right)^{-1}=\left(
\begin{array}{cccccc}
 1 & 0 & 0 & 0 & 0 & 0 \\
 -2 & 1 & 0 & 0 & 0 & 0 \\
 3 & -4 & 1 & 0 & 0 & 0 \\
 -4 & 9 & -6 & 1 & 0 & 0 \\
 5 & -16 & 18 & -8 & 1 & 0 \\
 -6 & 25 & -40 & 30 & -10 & 1 \\
\end{array}
\right).$$
Thus we associate the sequence $a_n$ which begins $1, 2, 5, 16, 65, 326, \ldots$ with the sequence $1,-2,3,-4,-5,\ldots$.
The generating function $\frac{e^x}{1-x}$ of $a_n$ may be expressed as the continued fraction
$$\cfrac{1}{1-2x-\cfrac{x^2}{1-4x-\cfrac{4x^2}{1-6x-\cfrac{9x^2}{1-8x-\cdots}}}}.$$
The $\mathbb{T}$ transform of this is
$$\cfrac{1}{1-2x-\cfrac{x^2}{1-2x-\cfrac{x^2}{1-2x-\cfrac{x^2}{1-2x-\cdots}}}}.$$ This is the generating function of the shifted Catalan numbers $C_{n+1}$, which are the revert transform of the sequence $1,-2,3,-4,\ldots$ with ordinary generating function $\frac{1}{(1+x)^2}$. The generating function of the shifted Catalan numbers $C_{n+1}$ can also be represented as the Thron-type continued fraction
$$\cfrac{1}{1-x-\cfrac{x}{1-\cfrac{x}{1-\cfrac{x}{1-\cdots}}}}.$$
\end{example}

\section{Consecutive patterns}
The discussion in this section is inspired by the work of Elizalde and Noy \cite{Elizalde}. We start with the generating function
$$g(x)=\frac{1-ux}{1-(u-1)x-(u-1)x^2}.$$
We then have
$$\Rev(g)(x)=\frac{1+(u-1)x-\sqrt{1-2(u+1)x+(u-1)(u+3)x^2}}{2x(u-(u-1)x)}.$$ This can be represented by the Jacobi continued fraction
$$\cfrac{1}{1-x-\cfrac{x^2}{1-(u+1)x-\cfrac{x^2}{1-(u+1)x-\cdots}}},$$ and by the Thron-type continued fraction
$$\cfrac{1}{1-qx-\cfrac{sx}{1-rx-\cfrac{sx}{1-rx-\cfrac{sx}{1-rx-\cdots}}}},$$ where
\begin{align*}q&=\frac{1-u-\sqrt{(u-1)(u+3)}}{2},\\
r&=-\sqrt{(u-1)(u+3)},\\
s&=\frac{1+u+\sqrt{(u-1)(u+3)}}{2}.
\end{align*}
\begin{example}
The revert transform of $g(x)=\frac{1-2x}{1-x-x^2}$ has generating function
$$\frac{\sqrt{1-6x+5x^2}-x-1}{2x(x-2)}.$$
This sequence \seqnum{A033321} begins
$$1, 1, 2, 6, 21, 79, 311, 1265, 5275, 22431, 96900,\ldots$$ and it is the binomial transform of Fine's numbers (\seqnum{A000957}). The generating function can be expressed as
$$\cfrac{1}{1-x-\cfrac{x^2}{1-3x-\cfrac{x^2}{1-3x-\cdots}}},$$ or equivalently
$$\cfrac{1}{1+\frac{\sqrt{5}+1}{2}x-\cfrac{\frac{\sqrt{5}+3}{2}x}{1+\sqrt{5}-\cfrac{\frac{\sqrt{5}+3}{2}x}{1+\sqrt{5}x-\cdots}}}.$$
This sequence counts the number of skew Dyck paths of semi-length $n$ which end in a down step. It also counts the number of permutations avoiding the patterns $\{2431,4231,4321\}$, among others.
\end{example}

Reverting to the general case, the generating function $\Rev(g)(x)$ is the bi-variate generating function of the number triangle that begins
$$\left(
\begin{array}{ccccccc}
 1 & 0 & 0 & 0 & 0 & 0 & 0 \\
 1 & 0 & 0 & 0 & 0 & 0 & 0 \\
 2 & 0 & 0 & 0 & 0 & 0 & 0 \\
 4 & 1 & 0 & 0 & 0 & 0 & 0 \\
 9 & 4 & 1 & 0 & 0 & 0 & 0 \\
 21 & 15 & 5 & 1 & 0 & 0 & 0 \\
 51 & 50 & 24 & 6 & 1 & 0 & 0 \\
\end{array}
\right).$$
This is \seqnum{A092107}, which specifies the number of Dyck paths of semi-length $n$ having exactly $k$ $UUU$'s (triple rises) where $U=(1,1)$. The exponential generating function $\frac{1}{g_e(x)}$ is given by
$$\frac{2e^{\frac{1}{2}(1-u+\sqrt{(u-1)(u+3)})x}\sqrt{(u-1)(u+3)}}{1+u+\sqrt{(u-1)(u+3)}+e^{\sqrt{(u-1)(u+3)}x}(-1-u+\sqrt{(u-1)(u+3)})}.$$
This can be expressed as the Jacobi continued fraction
$$\cfrac{1}{1-x-\cfrac{x^2}{1-(2+u)x-\cfrac{4x^2}{1-(3+2u)x-\cfrac{9x^2}{1-(4+3u)-\cdots}}}}.$$
This generating function is the bi-variate generating function of the triangle that begins
$$\left(
\begin{array}{ccccccc}
 1 & 0 & 0 & 0 & 0 & 0 & 0 \\
 1 & 0 & 0 & 0 & 0 & 0 & 0 \\
 2 & 0 & 0 & 0 & 0 & 0 & 0 \\
 5 & 1 & 0 & 0 & 0 & 0 & 0 \\
 17 & 6 & 1 & 0 & 0 & 0 & 0 \\
 70 & 41 & 8 & 1 & 0 & 0 & 0 \\
 349 & 274 & 86 & 10 & 1 & 0 & 0 \\
\end{array}
\right).$$
This is \seqnum{A162975}, which counts the number of permutations of length $n$ with $k$ occurrences of the subword $132$. The effect of the $\mathbb{T}$ transform in this case is seen in going from the consecutive pattern $132$ in permutations to the pattern $UUU$ in Dyck paths.  On the one hand, there is an apparent loss of specificity, but on the other hand (we can go from one Jacobi fraction to the other), this loss is reversible.

This case study shows that heretofore little studied sequences, such as the sequence \seqnum{A152163} with generating function $\frac{1-2x}{1-x-x^2}$ that begins
$$1,-1,0,-1,-1,-2,-3,-5,-8,\ldots$$ may be worth further analysis. We note that
\begin{align*}\frac{1-2x}{1-x-x^2}&=\cfrac{1}{1+x+\cfrac{x^2}{1-2x}}\\
&=\frac{1}{1+\frac{1}{2}x+\cfrac{\frac{1}{2}x}{1-2x}}.\end{align*}
If we consider the generating function $g(x)=\frac{1}{1+x+x^2}$, which expands to give the sequence that begins
$$1, -1, 0, 1, -1, 0, 1, -1, 0, 1, -1,\ldots,$$ we find that $\frac{1}{g_e(x)}$ is the generating function of
the sequence \seqnum{A049774}, which begins
$$1, 1, 2, 5, 17, 70, 349, 2017, 13358, 99377, 822041, \ldots,$$ and which counts the number of permutations not containing the consecutive pattern $123$. The corresponding ordinary generating function is given by the Jacobi continued fraction $\mathcal{J}(1,2,3,\ldots;1,4,9,\ldots)$. The $\mathbb{T}$ transform of this is given by $\mathcal{J}(1,1,1,\ldots;1,1,1,\ldots)$, which is the generating function of the Motzkin numbers. This corresponds to the fact that the Motzkin numbers are the revert transform of $[x^n]\frac{1}{1+x+x^2}$. The Motzkin numbers count the number of Dyck paths of semi-length $n$ with no $UUU$'s.

\section{Other $\mathbb{T}$ transforms}
The $\mathbb{T}$ transform above depends on the sequence $(n+1)^2$ in an obvious way, so is best adapted to Jacobi fractions whose coefficients of $x^2$ are multiples of that sequence. Other Jacobi fractions can depend on other well-known sequences. In addition, we can combine such transformations. The following examples illustrate this.
\begin{example} We let $g(x)=\frac{1}{1+ax+bx^2}$ and we consider the exponential generating function given by
$$G(x;a,b)=\frac{d}{dx} \Rev\left(\int_0^x g(t)\,dt\right).$$
We find that $$G(x;a,b)=\mathcal{J}(a,2a,3a,\ldots; 2b(1,3,6,10,\ldots))=\mathcal{J}(a(n+1);2b \binom{n+2}{2}).$$ The generating function $G(x)$ expands to give a sequence of polynomials in $s$ that begins
$$1, s, 2r + s^2, 8rs + s^3, 16r^2 + 22rs^2 + s^4,\ldots.$$ As a family of polynomials in $s$, this sequence has a coefficient array that begins
$$\left(
\begin{array}{ccccccc}
 1 & 0 & 0 & 0 & 0 & 0 & 0 \\
 0 & 1 & 0 & 0 & 0 & 0 & 0 \\
 2 r & 0 & 1 & 0 & 0 & 0 & 0 \\
 0 & 8 r & 0 & 1 & 0 & 0 & 0 \\
 16 r^2 & 0 & 22 r & 0 & 1 & 0 & 0 \\
 0 & 136 r^2 & 0 & 52 r & 0 & 1 & 0 \\
 272 r^3 & 0 & 720 r^2 & 0 & 114 r & 0 & 1 \\
\end{array}
\right).$$
We have, for instance,
\begin{align*}
G(x;1,1)&=\frac{3}{4} \sec^2\left(\frac{\sqrt{3}x}{2}+\frac{\pi}{6}\right)\quad & \seqnum{A080635}(n+1)\\
G(x;1,2)&=\frac{7}{3\cos(\sqrt{7}x)-\sqrt{7}\sin(\sqrt{7}x)+4} & \seqnum{A234797}(n+1)\\
G(x;2,1)&=\frac{1}{(1-x)^2} & \seqnum{A000142}(n+1)\\
G(x;2,2)&=\frac{1}{1-\sin(2x)} & \seqnum{A000828}(n+1)\end{align*}
The $\mathbb{T}$ transform corresponding to $\binom{n+2}{2}$ then transforms the Jacobi continued fraction for $G(x)$ to the fraction
$$\mathcal{J}(a,a,a,\ldots; 2b,2b,2b,\ldots).$$
This is the generating function of the $a$-th binomial transform of the expansion of
$$\cfrac{1}{1-\cfrac{2bx^2}{1-\cfrac{2bx^2}{1-\cdots}}}.$$
Thus the $\mathbb{T}$ transform, based on $\binom{n+2}{2}$, of $G(x)$ expands to give the sequence with general term
$$\sum_{k=0}^n \binom{n}{k}a^{n-k} (2b)^{k/2}C_{\frac{k}{2}} \frac{1-(-1)^k}{2}.$$
This $\mathbb{T}$ transform based on $\binom{n+2}{2}$ can be regarded as a map from colored increasing trees to colored Motzkin paths.
\end{example}

It is possible to combine different versions of the $\mathbb{T}$ transform. For instance, we can start with the unsigned Genocchi numbers of the first kind of even index \seqnum{A110501}, which begin
$$1, 1, 3, 17, 155, 2073, 38227, 929569, 28820619, 1109652905,\ldots.$$ 
The generating function of these numbers may be expressed as 
$$G(x)=\mathcal{J}((n+1)(2n+1);(n+2)(n+1)^3).$$ 
Then we have 
$$\mathbb{T}_{\binom{n+2}{2}}(\mathbb{T}_{(n+1)^2}(G(x)))=\mathcal{J}(1,4,4,4,\ldots;2,2,2,\ldots)=\Rev\left(\frac{1-3x}{1-2x-x^2}\right).$$ 
This sequence begins 
$$1, 1, 3, 13, 63, 325,\ldots.$$ Its generating function can be expressed as the following Thron-type continued fraction.
$$\cfrac{1}{1+(1+\sqrt{2})x-\cfrac{(2+\sqrt{2})x}{1+2\sqrt{2}x-\cfrac{(2+\sqrt{2})x}{1+2\sqrt{2}x-\cdots}}}.$$ 
We note that the sequence with generating function $\Rev\left(\frac{1-3x}{1-2x-x^2}\right)$ is the binomial transform of the sequence \seqnum{A114710} which counts the number of hill-free Schr\"oder paths of length $2n$ that have no horizontal steps on the $x$-axis. The generating function of \seqnum{A114710} can be represented by the Thron-type continued fraction
$$\cfrac{1}{1+x-\cfrac{x}{1-x-\cfrac{x}{1-x-\cfrac{x}{1-x-\cdots}}}}.$$ 
In similar fashion, we can consider the Genocchi numbers of the second kind \seqnum{A005439} which begin
$$1, 2, 8, 56, 608, 9440, 198272, 5410688, 186043904, 7867739648,\ldots.$$ The generating function of these numbers is given by 
$$G_2(x)=\mathcal{J}(2(n+1)^2;4{\binom{n+2}{2}}^2).$$
We then have 
$$\mathbb{T}_{\binom{n+2}{2}}(\mathbb{T}_{\binom{n+2}{2}}(G_2(x)))=\mathcal{T}(2,4,4,\ldots;1,1,1,\ldots)=\Rev\left(\frac{1-2x}{1-3x^2}\right).$$
The sequence \seqnum{A033543} with generating function $\Rev\left(\frac{1-2x}{1-3x^2}\right)=\frac{2}{1+\sqrt{1-8x+12x^2}}$, counts the number of Motzkin paths of length $n$ in which the $(1,0)$-steps at level $0$ come in $2$ colors and those at a higher level come in $4$ colors. The generating function of this sequence can be represented by the following Thron-type continued fraction.
$$\cfrac{1}{1+\sqrt{3}x-\cfrac{(2+\sqrt{3})x}{1+2\sqrt{3}x-\cfrac{(2+\sqrt{3})x}{1+2\sqrt{3}x-\cdots}}}.$$ 
This sequence is the binomial transform of \seqnum{A033321}, which we have encountered already.  The sequence \seqnum{A033321} is itself the binomial transform of Fine's numbers \seqnum{A000957}. We end this section by noting that the generating function of Fine's numbers can be represented by the Thron-type continued fraction 
$$\cfrac{1}{1+x-\cfrac{x}{1-\cfrac{x}{1-\cfrac{x}{1-\cdots}}}}.$$ 
This form of the generating function illustrates that Fine's numbers are the INVERT(-1) transform of the Catalan numbers (the INVERT$(r)$ transform of $[x^n]g(x)$ is given by $[x^n]\frac{g(x)}{1-rx g(x)}$).

\section{Binomial transform of a special Thron-type continued fraction}
In the above section, we have seen the binomial transform occur a number of times. We thus pose the following question. Let $a_n$ be the expansion of the revert transform of $g(x)=\frac{1+ax}{1+bx+cx^2}$. We now that $a_n$ will have a generating function expressible as 
$$\cfrac{1}{1-qx-\cfrac{sx}{1-rx-\cfrac{sx}{1-rx-\cdots}}}.$$ 
The question we ask is can we express the generating function of the binomial transform of $a_n$, given by $b_n=\sum_{k=0}^n \binom{n}{k}a_k$, as a Thron-type continued fraction. For this, we have the following result.
\begin{proposition} Let $a_n$ be the revert transform of the expansion of $\frac{1+ax}{1+bx+cx^2}$. Then the generating function of the binomial transform $b_n=\sum_{k=0}^n \binom{n}{k}a_k$ is expressible as the following Thron-type continued fraction 
$$\cfrac{1}{1-\tilde{q}x-\cfrac{\tilde{s}x}{1-\tilde{r}x-\cfrac{\tilde{s}x}{1-\tilde{r}-\cdots}}},$$ where 
\begin{align*}\tilde{q}&=\frac{b+1-\sqrt{(b+1)^2-4(c+a)}}{2}\\
\tilde{r}&=-\sqrt{(b+1)^2-4(c+a)}\\
\tilde{s}&=-\frac{2a-b-1-\sqrt{(b+1)^2-4(c+a)}}{2}.\end{align*}
\end{proposition}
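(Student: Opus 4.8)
The plan is to reduce the statement to a reversion identity and then invoke the earlier Thron proposition. First I would translate the continued-fraction data $(\tilde q,\tilde r,\tilde s)$ back into generating-function data using the relations $a=q-r-s$, $b=2q-r$, $c=q(q-r)$ established in the Thron proposition. Writing $D=\sqrt{(b+1)^2-4(c+a)}$ and substituting the stated $\tilde q,\tilde r,\tilde s$, a short computation gives
$$\tilde a:=\tilde q-\tilde r-\tilde s=a,\qquad \tilde b:=2\tilde q-\tilde r=b+1,\qquad \tilde c:=\tilde q(\tilde q-\tilde r)=c+a.$$
By that proposition, the displayed Thron continued fraction with parameters $\tilde q,\tilde r,\tilde s$ is exactly the generating function of the revert transform of $\tilde g(x)=\frac{1+ax}{1+(b+1)x+(c+a)x^2}$. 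Hence it suffices to prove the purely combinatorial claim that the binomial transform of $a_n$ is the revert transform of $\tilde g$.

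Next I would encode both transforms on generating functions. Let $A(x)=\sum_{n\ge 0}a_n x^n$ and let $F(x)=xg(x)\in\mathcal F_1$, so that $A$ being the revert transform of $g$ means precisely $xA(x)=\bar F(x)$. The binomial transform $b_n=\sum_{k}\binom{n}{k}a_k$ has ordinary generating function $B(x)=\frac{1}{1-x}A\!\left(\frac{x}{1-x}\right)$. Setting $w=\frac{x}{1-x}$, so that $\frac{1}{1-x}=1+w$, and simplifying, I obtain the clean identity $xB(x)=\bar F\!\left(\frac{x}{1-x}\right)$.

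The heart of the argument is a single functional identity. Put $\tilde F(x)=x\tilde g(x)$. A direct rational-function computation shows
$$\frac{F(y)}{1+F(y)}=\frac{y(1+ay)}{1+(b+1)y+(c+a)y^2}=\tilde F(y),$$
where the first equality just clears denominators in $F(y)=\frac{y(1+ay)}{1+by+cy^2}$ and the second is the definition of $\tilde F$. Composing with $\bar F$ then gives
$$\tilde F\big(xB(x)\big)=\frac{F\big(\bar F(\frac{x}{1-x})\big)}{1+F\big(\bar F(\frac{x}{1-x})\big)}=\frac{\frac{x}{1-x}}{1+\frac{x}{1-x}}=x,$$
so by uniqueness of compositional inverse in $\mathcal F_1$ we conclude $xB(x)=\overline{\tilde F}(x)$; that is, $B(x)$ is the revert transform of $\tilde g$. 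Combined with the first paragraph, this finishes the proof.

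I expect the only genuinely delicate point to be spotting the right packaging in the middle step: the binomial transform, which looks unrelated to reversion, becomes the M\"obius substitution $y\mapsto\frac{y}{1+y}$ inside the series being reverted, and it is exactly this substitution that carries $g$ to $\tilde g$. Once that is seen, every remaining step is a routine power-series or rational-function manipulation, and the parameter bookkeeping $\tilde a=a$, $\tilde b=b+1$, $\tilde c=c+a$ matches the stated $\tilde q,\tilde r,\tilde s$ by construction.
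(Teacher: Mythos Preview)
Your proof is correct and follows essentially the same route as the paper: both arguments show that the binomial transform of the revert of $g$ coincides with the revert of $\tilde g(x)=\dfrac{1+ax}{1+(b+1)x+(c+a)x^2}=\dfrac{g(x)}{1+xg(x)}$, and then read off $\tilde q,\tilde r,\tilde s$ from the earlier Thron proposition applied to the shifted parameters $(a,b+1,c+a)$. The paper simply invokes the identity ``binomial transform of a revert transform equals revert transform of the INVERT$(-1)$ transform'' as a known fact, whereas you supply a clean proof of that identity via $\tilde F=F/(1+F)$ and the composition $\tilde F(\bar F(w))=w/(1+w)$; in that sense your argument is more self-contained, but the underlying strategy is the same.
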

\begin{proof} The binomial transform of a revert transform is equal to the revert transform of the INVERT$(-1)$ transform of the original sequence. Now the INVERT$(-1)$ transform of $g(x)=\frac{1+ax}{1+bx+cx^2}$, which is equal to $\frac{g(x)}{1+x g(x)}$, is equal to
$$\frac{1+ax}{1+(b+1)x+(c+a)x^2}.$$ 
The result follows from this.
\end{proof}
\begin{example} The generating function $\frac{\sqrt{1+2x-11x^2}+3x-1}{2x(2-5x)}$ of the revert transform $a_n$ of $\frac{1+2x}{1+3x+5x^2}$ can be expressed as the Thron-like continued fraction

$$\cfrac{1}{1-\left(\frac{3}{2}-\frac{\sqrt{11}i}{2}\right)x-\cfrac{\left(-\frac{1}{2}+\frac{\sqrt{11}i}{2}\right)x}{1+\sqrt{11}ix-\cfrac{\left(-\frac{1}{2}+\frac{\sqrt{11}i}{2}\right)x}{1+\sqrt{11}ix-\cdots}}}.$$
Then the binomial transform of the binomial transform $b_n=\sum_{k=0}^n \binom{n}{k}a_k$ has a generating function expressible as the Thron-type continued fraction
$$\cfrac{1}{1-(2-sqrt{3}i)x-\cfrac{\sqrt{3}ix}{1+2\sqrt{3}ix-\cfrac{\sqrt{3}ix}{1+2\sqrt{3}ix-\cdots}}}.$$
In this case the sequence $a_n$ begins 
$$ 1, 1, 4, 4, 25, 7, 199, -179,\ldots$$ and the sequence $b_n$ begins 
$$1, 2, 7, 20, 70, 218, 763, 2468,\ldots.$$ 
\end{example}
\section{Conclusions} A main result of this note is that the revert transform of $g(x)=\frac{1+ax}{1+bx+cx^2}$ has a generating function that can be expressed both as a Jacobi- and a Thron-type continued fraction. The combinatorial implications of this need further exploration. We have introduced the $\mathbb{T}$ transform as a means of going from the continued fraction representation of the generating function of $\frac{1}{g_e(x)}$ to that of revert transform of $g(x)$. We have seen that under the $\mathbb{T}$ transform, the image of the factorials $n!$ is the Catalan numbers $C_{n+1}$. The factorials $n!$ count all permutations on a set of $n$ elements, while the Catalan numbers count permutations avoiding any three term permutation. Similarly the sequence \seqnum{A049774} counts permutations not containing the consecutive pattern $123$, while its $\mathbb{T}$ transform, the Motzkin numbers, counts Dyck paths that do not contain the pattern $UUU$. This hints at pattern avoidance being an important factor in the application of the $\mathbb{T}$ transform, with patterns losing their specificity under the transform. We further indicate that other similar transforms, based on common sequences, have special combinatorial meaning.

\bigskip
\hrule
\bigskip
\noindent 2010 {\it Mathematics Subject Classification}:
Primary 05A15; Secondary 11B39, 11B83, 1536, 30B10, 30B70.
\noindent \emph{Keywords:} Jacobi continued fraction, Thron continued fraction, $\mathbb{T}$ transform, series reversion, generating function, Riordan array.

\bigskip
\hrule
\bigskip
\noindent (Concerned with sequences
\seqnum{A000045},
\seqnum{A000108},
\seqnum{A000142},
\seqnum{A000522},
\seqnum{A000629},
\seqnum{A000670},
\seqnum{A000828},
\seqnum{A000957},
\seqnum{A001003},
\seqnum{A001045},
\seqnum{A003543},
\seqnum{A005439},
\seqnum{A033321},
\seqnum{A046802},
\seqnum{A049774},
\seqnum{A080635},
\seqnum{A092107},
\seqnum{A110501},
\seqnum{A114710},
\seqnum{A133314},
\seqnum{A152163},
\seqnum{A162975} and
\seqnum{A234797}.

\end{document}